\ifpdf \DeclareGraphicsRule{.eps}{pdf}{.pdf}{`ps2pdf #1 `dirn ame
#1`/`basename #1 .eps`.pdf} \fi
\DeclareSymbolFont{msbm}{U}{msb}{m}{n}
\DeclareMathSymbol{\C}{\mathalpha}{msbm}{'103}
\DeclareMathSymbol{\R}{\mathalpha}{msbm}{'122}
\DeclareMathSymbol{\Q}{\mathalpha}{msbm}{'121}
\DeclareMathSymbol{\Z}{\mathalpha}{msbm}{'132}
\DeclareMathSymbol{\N}{\mathalpha}{msbm}{'116}
\DeclareMathSymbol{\K}{\mathalpha}{msbm}{'113}
\newtheorem*{definitions*}{Definitions} 
\newtheorem{lemma}{Lemma}
\newtheorem{cor}[lemma]{Corollary}
\newtheorem{theorem}[lemma]{Theorem}
\newtheorem{problem}{Problem}
\newcommand{\old}[1]{{}}
\newcommand{\bi}{\begin{itemize}}
\newcommand{\ei}{\end  {itemize}}
\newcommand{\bt}{\begin{tabbing}}
\newcommand{\et}{\end  {tabbing}}
\newcommand{\be}{\begin{enumerate}}
\newcommand{\ee}{\end  {enumerate}}
\newtheoremstyle{obs}
  {\topsep} 
  {0pt} 
  {} 
  {} 
  {\bfseries} 
  {.} 
  {.5em} 
  {} 
\theoremstyle{obs}
\def\begin@lgo{\begin{minipage}{1in}\begin{tabbing}
        \quad\=\qquad\=\qquad\=\qquad\=\qquad\=\qquad\=\qquad\=\kill}
\def\end@lgo{\end{tabbing}\end{minipage}}
\long\def\@makecaption#1#2{
   \vskip \abovecaptionskip
   \setbox\@tempboxa\hbox{{\sf\footnotesize \textbf{#1.} #2}}
   \ifdim \wd\@tempboxa >\hsize         
       {\sf\footnotesize \textbf{#1.} #2\par}
     \else                              
       \hbox to\hsize{\hfil\box\@tempboxa\hfil}
   \fi}
\title{On a  Generalization of the Marriage Problem}
\author{
Jonathan Lenchner \thanks{IBM T. J. Watson Research Center, Yorktown Heights, NY USA; lenchner@us.ibm.com.}
}                    
\date{}
\begin{document}


\maketitle

\section*{Abstract}

We present a generalization of the marriage problem underlying Hall's famous Marriage Theorem to what we call the Symmetric Marriage Problem, a problem that can be thought of as a special case of Maximal Weighted Bipartite Matching.  We show that there is a solution to the Symmetric Marriage Problem if and only if a variation on Hall's Condition holds on each of the bipartitions. We prove both finite and infinite versions of this result and provide applications. 

\section{Introduction}

In the folklore interpretation of the famed Marriage Theorem due to Philip Hall \cite{HALL:1935} one is given a set $G$ of girls and a set $B$ of boys. Each girl identifies a list of boys that she would willingly marry. Each boy is willing to marry any girl that is willing to marry him, or no one at all. The theorem states that as long as for any subset of the girls, the size of the union of the associated lists of boys is at least as large as the number of girls, then there is a matching (in other words, an injective function) of girls to boys that makes everyone happy. 

In this paper we consider a generalization of the marriage problem in which not just the girls but also the boys may make lists of who they are willing to marry. Crucially, it is also permitted that a girl or boy not make a list, which, like in the classical marriage problem, means they are willing to marry anyone willing to marry them, or no one at all. We call this variation of the marriage problem the Symmetric Marriage Problem or SMP because the girls and boys are treated equally. Further, the SMP generalizes the problem behind Hall's Theorem, which we refer to as the Classical Marriage Problem or CMP, because if we refer to $G_L \subset G$ as the subset of girls who make lists, and $B_L \subset B$ as the subset of boys who make lists, then a CMP is just an SMP with $G_L = G$ and $B_L = \varnothing$.

%
%

The remainder of this paper is structured as follows. In the next section we give a formal statement of Hall's Theorem and provide a variety of applications. In the section that follows we state and prove the finite Symmetric Marriage Theorem and provide applications of it. We then prove an infinite version of the Symmetric Marriage Theorem and finish up with some additional considerations and questions.

\section{Hall's Marriage Theorem and the Classical Marriage Problem}

In 1935 Philip Hall proved the following celebrated theorem \cite{HALL:1935}:


\begin{theorem} \label{thm:hmt_classical} \textbf{Hall's Marriage Theorem} 
Let $\{B_g\}_{g \in G}$ be a finite collection of subsets of a finite set $B$. If for any $G' \subset G$, $|G'| \leq |\cup{\{B_g\}_{g \in G'}}|$ then there is an injective function $T:G \rightarrow B$ such that $T(g) \in B_g$ for all $g \in G$. Conversely, given a collection $\{B_g\}_{g \in G}$ of subsets of $B$, if a function $T: G \rightarrow B$ exists such that $T(g) \in B_g$, then for any $G' \subset G$, $|G'| \leq |\cup{\{B_g\}_{g \in G'}}|$.
\end{theorem}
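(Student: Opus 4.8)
The plan is to prove the two implications separately, disposing first of the converse, which is immediate, and then concentrating on the existence direction, which I would establish by induction on $|G|$. For the converse, suppose an injective $T : G \to B$ with $T(g) \in B_g$ is given, and fix any $G' \subset G$. The values $T(g)$ for $g \in G'$ are pairwise distinct by injectivity, and each lies in $\cup\{B_g\}_{g \in G'}$; hence that union contains at least $|G'|$ distinct elements, yielding $|G'| \leq |\cup\{B_g\}_{g \in G'}|$. No further work is required here.

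For the forward direction I would induct on $|G|$, with base case $|G| = 1$: Hall's condition applied to the singleton forces $|B_g| \geq 1$, so the lone girl may be matched to any boy in her list. For the inductive step, assume the result for every collection indexed by fewer than $|G|$ girls, and split on whether a tight proper subset exists, where I call $G' \subsetneq G$ \emph{tight} if $|G'| = |\cup\{B_g\}_{g \in G'}|$. In the first case, suppose no nonempty proper subset is tight, so that $|\cup\{B_g\}_{g \in G'}| \geq |G'| + 1$ for every nonempty $G' \subsetneq G$. Then I would select any girl $g_0$, match her to any $b_0 \in B_{g_0}$, and pass to the reduced collection $\{B_g \setminus \{b_0\}\}_{g \in G \setminus \{g_0\}}$. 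Deleting the single element $b_0$ lowers each union size by at most one, so the strict slack guarantees Hall's condition still holds on the smaller index set, and induction completes the matching.

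In the second case, let $G_0 \subsetneq G$ be a nonempty tight subset and put $B_0 = \cup\{B_g\}_{g \in G_0}$, so that $|B_0| = |G_0|$. Since $|G_0| < |G|$, induction supplies an injective matching of $G_0$ onto $B_0$. For the remaining girls I would use the trimmed lists $B_g \setminus B_0$ and verify Hall's condition there: for any $G' \subseteq G \setminus G_0$, disjointness of $G'$ and $G_0$ together with tightness gives $|\cup\{B_g \setminus B_0\}_{g \in G'}| = |\cup\{B_g\}_{g \in G' \cup G_0}| - |B_0| \geq |G' \cup G_0| - |G_0| = |G'|$, where the inequality is Hall's condition applied to $G' \cup G_0$. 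Induction then matches $G \setminus G_0$ injectively into $B \setminus B_0$, and since this matching avoids $B_0$ it combines with the matching of $G_0$ into the desired injection $T$ on all of $G$.

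The converse and the base case are routine; the real content, and the step I expect to be the main obstacle, is organizing the inductive step so that Hall's condition is provably inherited by the reduced problem in each branch. The crux is the computation in the tight case showing $|\cup\{B_g \setminus B_0\}_{g \in G'}| \geq |G'|$, which relies on combining the tightness equality $|B_0| = |G_0|$ with Hall's inequality on the enlarged set $G' \cup G_0$; getting the bookkeeping on these disjoint unions exactly right, and confirming that the two sub-matchings occupy disjoint portions of $B$, is where the care is needed.
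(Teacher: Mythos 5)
Your proposal is correct, and there is nothing in the paper to compare it against: the paper states Hall's Theorem as a known result with a citation to Hall's 1935 paper and offers no proof of its own, so your argument stands or falls on its own merits. It stands. What you have written is the classical Halmos--Vaughan induction: the converse by injectivity, then a case split on whether a nonempty proper tight set $G_0$ (one with $|G_0| = |\cup\{B_g\}_{g \in G_0}|$) exists. Both branches check out. In the no-tight-set branch, the key point --- that any $G' \subseteq G \setminus \{g_0\}$ is a nonempty \emph{proper} subset of $G$, so the strict slack $|\cup\{B_g\}_{g \in G'}| \geq |G'| + 1$ absorbs the deletion of $b_0$ --- is exactly right. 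In the tight-set branch, your bookkeeping identity $\cup\{B_g \setminus B_0\}_{g \in G'} = (\cup\{B_g\}_{g \in G' \cup G_0}) \setminus B_0$, combined with Hall's condition on $G' \cup G_0$ and the tightness equality $|B_0| = |G_0|$, correctly yields Hall's condition for the trimmed lists; and since the second matching lands in $B \setminus B_0$ while the first is onto $B_0$, the union of the two is injective, as you note. The only detail worth making explicit is that the inductive hypothesis applies to $G_0$ because Hall's condition on $G$ restricts to any sub-collection, and that both $|G_0| < |G|$ and $|G \setminus G_0| < |G|$ hold because $G_0$ is nonempty and proper --- but these are implicit in what you wrote, not gaps.
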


As noted in the introduction, the folklore way to think about the Marriage Theorem is that you are given a set $G$ of girls and a set $B$ of boys. Each girl identifies a set of boys that she would willingly marry. 
Additionally, one supposes that each boy is willing to marry any girl that is willing to marry him, or no one at all. Then as long as for any subset, $G'$, of girls, the size of the union of the set of boys they would like to marry is as large as $|G'|$, then there is a one-to-one pairing of girls to boys satisfying all of the girls' (and hence everyone's) wishes. If $|B| > |G|$ then the map is injective but not surjective. The converse is obvious; for a pairing of girls to boys to exist there cannot be a subset of girls, the union of whose acceptable pairings is smaller than the number of girls in the subset.

Let us follow convention and refer to the criterion of Theorem \ref{thm:hmt_classical}, that for any $G' \subset G$, $|G'| \leq |\cup{\{B_g\}_{g \in G'}}|$ as \textbf{Hall's Condition}, or alternatively, the \textbf{Marriage Condition}. 

\begin{figure}[h]
\centerline{\scalebox{0.20}{\includegraphics{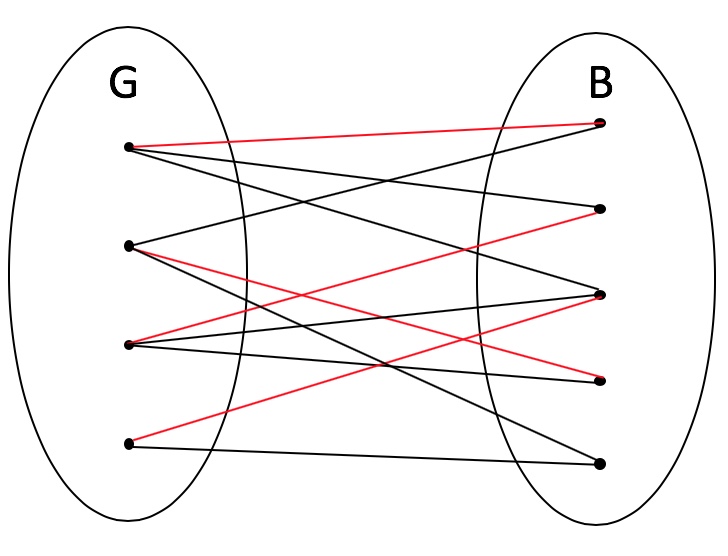}}}
\caption{The folklore view of the Marriage Theorem. The matching in red is one that makes everyone happy.} \label{fig:cmp}
\end{figure}

We shall call the problem associated with the Marriage Theorem, the Classical Marriage Problem or CMP, though it is perhaps more often referred to simply as bipartite matching. We will refer to a CMP instance $\mathscr{C}$ using the triple $\mathscr{C} = (G, B, \{B_g\}_{g \in G})$. Hall's Theorem does not give an effective (in other words, polynomial time) means of determining if a given bipartite graph has a maximum matching that completely covers the set $G$. To make this determination using the theorem one would have to test all subsets $G' \subset G$ and consider the size of the associated sets  $\cup_{g \in G'} B_g$. However one can find a maximum matching in a bipartite graph by viewing the problem as a flow problem. Connect all vertices in $G$ to a source node $s$ and all vertices in $B$ to a sink node $t$, and have the flow go from $\{s\} \rightarrow G \rightarrow B \rightarrow \{t\}$, with all edges given capacity of $1$ unit.  Then, applying, for example, the Edmonds-Karp implementation of the Ford-Fulkerson method \cite{CORMEN:1999, EDMONDS:1972}, and observing that the flow associated with any augmenting path is always integral, we see that a max-flow corresponds to a maximal matching, and hence a solution to the CMP can be found in the runtime of the Edmonds-Karp algorithm, or $O(V E^2)$, where $V$ and $E$ denote, respectively, the number of vertices and edges in the graph. Faster algorithms are known; the fastest known algorithm being that due to Hopcroft and Karp \cite{H-K:1973}, which runs in $O(\sqrt{V}E)$ time.

The magic of Hall's Theorem is that sometimes one can inspect the conditions of the problem and determine that Hall's Condition applies and hence the problem must have a solution, as is famously the case for a variety of facility location problems (see, e.g., \cite{CYGAN:2012}), a number of combinatorial problems and even a problem in group theory. The first problem below is from the theory of tournaments while the second problem comes from the field of combinatorial designs \cite{PUTNAM:2012, brilliant:hall, brilliant:hall-apps}.

Consider a round robin tournament in which each of $2n$ teams plays a series of head-to-head matches against each other team over the course of $2n-1$ sessions. Each session consists of $n$ head-to-head matches with each team participating in one match. Each match is decisive. In other words, each match produces a winner and a loser. Is it then possible to pick a unique winner from each of the rounds?

The answer is yes. On the $i$th day, let $W_i$ denote the set of winning teams on that day. 
If it were not possible to pick a set of unique winners then we must have a collection of $k \leq 2n$ sets $\{W_{i_j}\}$, the size of the union of whose elements is less than $k$. This means that there must be a team that has lost each of these $k$ games (otherwise $\cup \{W_{i_j}\} = 2n$). But then, since the tournament is a round-robin, this team has lost to $k$ \textit{distinct} other teams and so $\cup \{W_{i_j}\} \geq k$, a contradiction.  It follows that the set $\{W_i\}$ satisfies Hall's Condition and hence it must always be possible to pick a unique winner in each round of the tournament.

In the second problem we consider a $2n$ x $2n$ chess board where we place $n$ rooks in each row and $n$ rooks in each column. We claim that it is always possible to find a sub-collection of the rooks of size $n$ 
with a unique rook in each row and column (i.e., such that no two rooks attack one another). If there were no such sub-collection then, by Hall's Theorem, we would have to have a sub-collection of $K$ rows, the union of whose rooks occupy columns of size less than $K$. Since any one row has rooks in $n$  columns we must have $K = n + k$ for some $1 \leq k \leq n - 1$. Note that if all $2n$ rows contained rooks that were confined to fewer than $2n$ columns then there would be some column with no rooks, which is impossible. But, analogously, if there are $n + k$ rows with rooks confined to fewer than $n + k$ columns, then the remaining columns must have rooks that are confined to fewer than $n$ rows, a contradiction. 

For an additional application in the field of combinatorial designs, showing that one can always extend a Latin rectangle to a Latin square, see \cite{brilliant:hall-apps}. In group theory it is possible to use Hall's Theorem to show that if $H$ is a subgroup of a finite group $G$ then there is a set of representatives $\{g_i\} \subset G$ such that $\{g_iH\}$ cycles through the left cosets of $H$ without repetition, while $\{Hg_i\}$ simultaneously cycles through the right cosets of $H$ without repetition. For details see \cite{stack:cosets}.

\section{Main Result}


In addition to not being particularly politically correct, the Classical Marriage Problem is not particularly fair to the boys. They are assumed to be willing to go along with whatever the girls wish and not have preferences of their own. A natural generalization of the Classical Marriage Problem or CMP is to suppose that not just the girls can make lists of who they are willing  to marry, but also the boys. Moreover, to make the problem a complete generalization of the CMP, we allow for the possibility that either girls or boys may not make lists -- meaning that they are happy to go along with any arrangement, i.e., be married to anyone who wants to marry them or not be married at all. We allow that either a girl or boy declare that they wish not to be married at all, but in this case we delete them from anyone else's requirements and remove them from the problem. If, as a result, someone ends up with an empty list of potential partners their requirements cannot be met and the problem is unsatisfiable. Thus, henceforth, we assume that a boy or girl has a non-empty list of members of the opposite sex that they are willing to marry, or, if they are a girl, they are willing to marry any boy (or no one) and if they are a boy they are willing to marry any girl or no one. As we did for the CMP, for a given $g \in G$ we let $B_g$ refer to the set of boys $g$ is willing to marry, and analogously, for $b \in B$ we let $G_b$ refer to the set of girls $b$ is willing to marry. If $B_g = \varnothing$ then $g$ is assumed to be willing to marry any boy or no one, and analogously for $G_b$. Thus $B_g = \varnothing$ or $G_b = \varnothing$ have special meaning. If $B_g \neq \varnothing$ we say that $g$ \textit{has a list}, that list being $B_g$. Analogously if $G_b \neq \varnothing$ we say that $b$ \textit{has a list}, that list being $G_b$. 


%
%
Let us call this problem the \textbf{Symmetric Marriage Problem} or \textbf{SMP}. The CMP is a special case of the SMP where each girl makes a list but no boy does. We refer to an SMP instance using the $4$-tuple $\mathscr{S} = (G, B, \{B_g\}_{g \in G}, \{G_b\}_{b \in B})$. Further, given an SMP instance, $\mathscr{S}$, let $G_L \subset G$ denote the set of girls with lists and $B_L \subset B$ denote the set of boys with lists.  A formal statement of the SMP is as follows:

\begin{problem} \label{prob:formal_smp} \textbf{Symmetric Marriage Problem (SMP)} Let $G, B$ be two finite sets and to each $g \in G$ associate a set $B_g \subset B$ and to each $b \in B$ associate a set $G_b \subset G$. Does there exist an injective partial function $P:G \rightarrow B$, with $G_L \subset \textrm{Domain}(P), B_L \subset \textrm{Range}(P)$, such that $P(g) \in B_g~\forall g \in G_L$, and $P^{-1}(b) \subset G_b~\forall b \in B_L$?
\end{problem}

\smallskip

Before proceeding let us remark on a variant of the SMP, which we will call the \textbf{Baby SMP}. The Baby SMP is an alternative, though slightly weaker way in which the Classical Marriage Problem is sometimes framed, namely:

\begin{problem} \label{prob:baby_smp} \textbf{Baby SMP} Suppose one has a finite set $G$ of girls and a finite set $B$ of boys, and that each girl has been introduced to some non-empty subset of the boys and vice versa. Is there a matching of girls to boys such that each is paired with someone they have been introduced to? 

\noindent \textbf{Baby SMP - Formal Version}: Let $G$ and $B$ be two finite sets. To each $g \in G$ define a set $B_g \subset B,~B_g \neq \varnothing$, and analogously, for each $b \in B$ define $G_b \subset G$, $G_b \neq \varnothing$. Further, suppose that $b \in B_g \leftrightarrow g \in G_b$. Does there exist a bijective map $T:G \rightarrow B$ such that $T(g) \in B_g~\forall g \in G$ and $T^{-1}(b) \in G_b~\forall b \in B$? \end{problem}

For there to be a solution to a given Baby SMP instance we clearly must have $|G| = |B|$. Thus if each girl is willing to marry each boy she has been introduced to, we can reframe any Baby SMP as a CMP instance. We could have reframed the problem as if the boys had no special preferences (i.e., they would be willing to marry anyone willing to marry them, or no one at all) but by the symmetry of the relation of one person having been introduced to another person, a boy will automatically be satisfied with any girl who is satisfied with him in a potential solution and so there is no harm in introducing the more restrictive condition.


\bigskip

Now let us return to the full SMP (Problem \ref{prob:formal_smp}). Consider an SMP instance $\mathscr{S} = (G, B, \{B_g\}_{g \in G}, \{G_b\}_{b \in B})$. Recall that $G_L$ designates the set of girls with lists and $B_L$ the set of boys with lists.  Pare down the list of each $g \in G_L$ such that $b \in B_g \cap B_L$ only if $g \in G_b$, in other words, only if $b$ and $g$ are on each other's lists -- in this case we will say that $b$ and $g$ are \textbf{list compatible}. There is no loss in generality in doing so, since if $g \in G_L$ and $b \in B_g \cap B_L$ then we can only pair $g$ with $b$ if the two elements are list-compatible. Refer to the pared down set of lists as $\{B^*_g\}_{g \in G_L}$. Analogously, pare down the list of each $b \in B_L$ such that  $g \in G_b \cap G_L$ only if $g$ and $b$ are list compatible and refer to the pared down set of lists as $\{G^*_b\}_{b \in B_L}$.  

\begin{theorem} \label{thm:main} Consider the SMP $\mathscr{S} = (G, B, \{B_g\}_{g \in G}, \{G_b\}_{b \in B})$. $\mathscr{S}$ is solvable iff the respective CMPs $\mathscr{C}_G = (G_L, B, \{B^*_g\}_{g \in G_L})$ and $\mathscr{C}_B = (B_L, G, \{G^*_b\}_{b \in B_L})$ are both solvable.
\end{theorem}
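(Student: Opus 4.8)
The plan is to prove the two directions separately, treating the ``only if'' direction as routine and reserving the real work for the ``if'' direction. Before either, I would isolate the single consequence of the list-compatibility paring that drives the whole argument: call a pair $(g,b)$ \emph{admissible} if matching $g$ to $b$ violates neither person's list, i.e.\ ($g\notin G_L$ or $b\in B_g$) and ($b\notin B_L$ or $g\in G_b$). The point of the stars is that every edge produced by $\mathscr{C}_G$ or $\mathscr{C}_B$ is automatically admissible: if $g\in G_L$ and $b\in B^*_g$ then $b\in B_g$, and if in addition $b\in B_L$ then the paring forced $g\in G_b$; symmetrically if $b\in B_L$ and $g\in G^*_b$ then $g\in G_b$, and if also $g\in G_L$ then $b\in B_g$. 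So I never have to re-check list constraints once I am working with starred edges.

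For the ``only if'' direction, suppose the SMP has a solution $P$. Restricting $P$ to $G_L$ gives an injective total map $G_L\to B$ because $G_L\subseteq\mathrm{Domain}(P)$, and the SMP conditions upgrade $P(g)\in B_g$ to $P(g)\in B^*_g$ exactly as in the admissibility check above (using $B_L\subseteq\mathrm{Range}(P)$ to handle the case $P(g)\in B_L$). This solves $\mathscr{C}_G$; the symmetric restriction of $P^{-1}$ to $B_L$ solves $\mathscr{C}_B$.

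The substance is the ``if'' direction. Given a solution $T_G$ of $\mathscr{C}_G$ and a solution $T_B$ of $\mathscr{C}_B$, I have two matchings that may disagree, and I must fuse them into one injective partial map that still covers all of $G_L\cup B_L$. I would form the graph on $G\cup B$ whose edges are $\{g,T_G(g)\}$ for $g\in G_L$ together with $\{T_B(b),b\}$ for $b\in B_L$. Injectivity of $T_G$ and $T_B$ forces every vertex to have degree at most two, so each component is a simple path or an even cycle whose edges alternate between the two matchings. To control the endpoints I would orient each $T_G$-edge from its girl to its boy and each $T_B$-edge from its boy to its girl; then a vertex has out-degree one exactly when it lies in $G_L\cup B_L$ (is \emph{required}) and in-degree at most one always, so the components are in fact directed paths and directed cycles.

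The decisive structural fact, and what I expect to be the crux, is that along a directed path the source has out-degree one (required) while the sink has out-degree zero (not required), so each path contains exactly one required endpoint and every interior or cycle vertex is required. Granting this, I finish componentwise: on each cycle I take every other edge, a perfect matching covering all its (required) vertices; on each directed path $w_0\to\cdots\to w_m$ I pair edges from the source, $\{w_0,w_1\},\{w_2,w_3\},\dots$, which covers $w_0,\dots,w_{m-1}$ and leaves at most the unrequired sink unmatched. The union of these component matchings is an injective partial map $P$ that covers $G_L\cup B_L$ using only admissible edges, hence an SMP solution. The obstacle is entirely in that endpoint dichotomy: it is precisely what forbids a path with two required endpoints that could not be simultaneously matched, and establishing it cleanly is where I would spend the most care.
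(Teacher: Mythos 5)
Your proof is correct, and it reaches the theorem by a genuinely different route. The paper's proof builds the auxiliary four-part graph $G^*(\mathscr{S})$ with list nodes $L_g$ and $L_b$, merges the two CMP solutions into a single matching of size $|G_L|+|B_L|$, and then repairs \emph{mismatched edges} (pairs with $(L_{b},g)\in M$ but $(b,L_{g})\notin M$) one chain at a time, an iterative back-and-forth swapping argument whose termination rests on the finiteness of the number of mismatches. You instead superimpose the two matchings directly on $G\cup B$ and orient each edge away from the person whose list produced it; since out-degree $1$ then characterizes membership in $G_L\cup B_L$ while injectivity of $T_G$ and $T_B$ caps in-degree at $1$, every component is a directed path or an even directed cycle, the endpoint dichotomy you flag as the crux holds for exactly the reason you give (a path's source has out-degree $1$, its sink out-degree $0$, so exactly one endpoint is required), and taking alternate edges per component --- including the degenerate two-cycle when $T_G(g)=b$ and $T_B(b)=g$ --- finishes; your admissibility lemma correctly discharges all list constraints for edges drawn from either starred system. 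This is essentially the classical Mendelsohn--Dulmage exchange argument. As for what each approach buys: yours is shorter, needs no auxiliary vertices and no termination bookkeeping, and carries over essentially unchanged to the infinite setting (one-way infinite directed paths are matched from the source, two-way infinite paths and cycles by every other edge), which the paper must handle separately in Theorem \ref{thm:infinite_smp}; the paper's construction, for its part, exhibits the SMP as a single bipartite matching problem on $G^*(\mathscr{S})$, which it reuses for the $O(\sqrt{V}E)$ algorithmic observation following Corollary \ref{cor:2} and as the template for its later discussion of the non-binary problem.
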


\begin{proof} 
The only if part of the theorem is obvious so we focus our attention on showing that $\mathscr{S}$ is solvable so long as both $\mathscr{C}_G$ and $\mathscr{C}_B$ are solvable. For this purpose, given an SMP instance $\mathscr{S} = (G, B, \{B_g\}_{g \in G}, \{G_b\}_{b \in B})$, create a graph consisting of four sets of nodes. Two of the sets will be the elements of $G$ and $B$, which we will denote by these same letters. In addition, create a node $L_g$ for each $g \in G_L$, with $L_g$ representing the set of boys on girl $g$'s list that are list compatible with $g$\footnote{Note that $L_g \subset B^*_g$, with the difference that $B^*_g$ may additionally contain elements in $B_g \setminus B_L$.}. Denote the set $\{L_g\}_{g \in G_L}$ by $L_G$. The fourth group of nodes then consists of a node $L_b$ for each $b \in B_L$, with $L_b$ representing the set of \textit{girls} on boy $b$'s list that are list-compatible with him. Denote the set $\{L_b\}_{b \in B_L}$ by $L_B$.

From the SMP $\mathscr{S}$, and the four sets of nodes as just described, we form a graph, which we shall denote by $G^*(\mathscr{S})$, where we connect a vertex $g \in G$ to a vertex $b \in B$ iff either (i) $b$ is on $g$'s list and $b$ has no list, or (ii) $g$ is on $b$'s list and $g$ has no list. Next if $g \in G_L$ and $b \in B_L$ and $g$ and $b$ are list-compatible we connect $g$ to $L_b$ and $b$ to $L_g$. A sample SMP instance $\mathscr{S}$ with associated graph $G^*(\mathscr{S})$ is depicted in Figure \ref{fig:4_partitions}.
\begin{figure}[h] 
\centerline{\scalebox{0.33}{\includegraphics{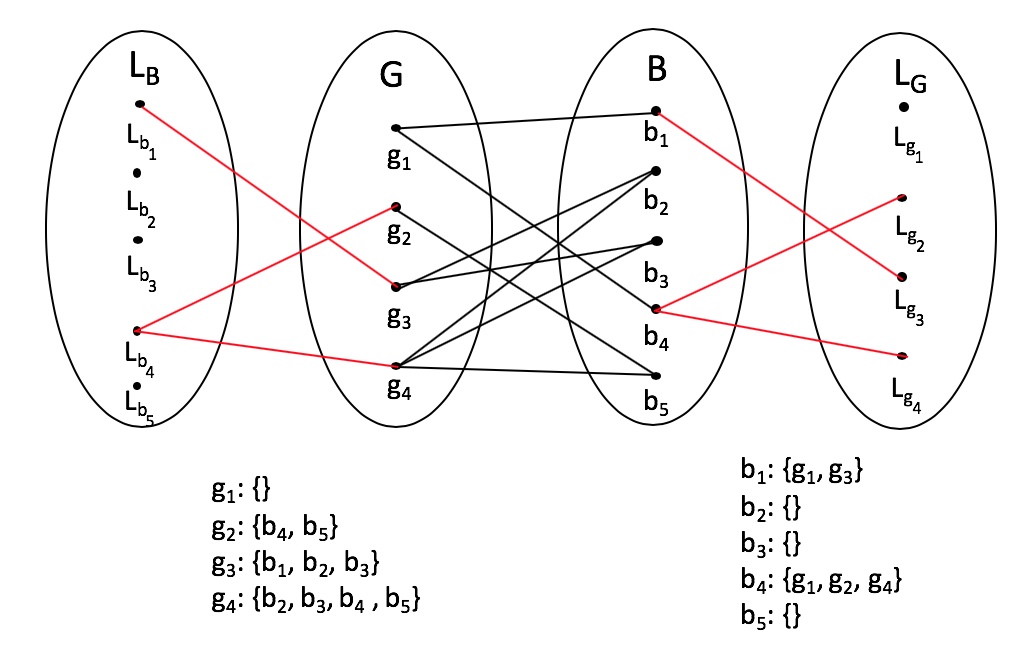}}}
\caption{A sample SMP instance $\mathscr{S}$ together with the associated graph $G^*(\mathscr{S})$. The lists of the girls are given on the bottom left and the lists of the boys are given on the bottom right. Edges in $G^*(\mathscr{S})$ between vertices in $G$ and vertices in $B$ are drawn in black, while edges incorporating vertices from $L_B$ or $L_G$ are drawn in red for clarity.} \label{fig:4_partitions}
\end{figure} 

%
Note that a solution to $\mathscr{C}_G$ corresponds to a matching of size $|G_L|$ in $G^*(\mathscr{S})$ touching each $g \in G_L$ but no other elements of $G$, while a solution to $\mathscr{C}_B$ corresponds to a matching of size $|B_L|$ in $G^*(\mathscr{S})$ touching each $b \in B_L$ but no other elements of $B$. Note, moreover, that these matchings are vertex-wise disjoint. Let us suppose then that we have a solution to both $\mathscr{C}_G$ and $\mathscr{C}_B$, and hence a combined matching $M$, which is of size size $|G_L| + |B_L|$.

We describe how to turn $M$ into a solution to $\mathscr{S}$. If $M$ contains edges $\{(g_{\alpha_i},b_{\beta_i}): g_{\alpha_i} \in G, b_{\beta_i} \in B\}$, and any time $M$ contains an edge $(L_{b_j}, g_i)$ it contains the corresponding edge $(b_j, L_{g_i})$ then we can convert such a matching into a partial function $P$ such that $P(g_{\alpha_i}) = b_{\beta_i}$ if $(g_{\alpha_i},b_{\beta_i}) \in M$ and $P(g_i) = b_j$ if both $(L_{b_j}, g_i), (b_j, L_{g_i}) \in M$. It is easy to check that $P$ is then a solution to $\mathscr{S}$. Thus if $M$ does not immediately provide a solution to $\mathscr{S}$ it must be the case that are some $K > 0$ edges with $(L_{b_j}, g_i) \in M$ but $(b_j,  L_{g_i}) \notin M$, or vice versa. Call these $K$ edges, $\{(L_{b_j}, g_i) \in M: (b_j, L_{g_i}) \notin M\} \cup \{(b_j, L_{g_i}) \in M: (L_{b_j}, g_i) \notin M\}$, the \textit{mismatched edges}. Under these circumstances we show how to alter the matching $M$ to get a new matching $M'$, still of size  $|G_L| + |B_L|$, but with fewer than $K$ mismatched edges. We can run this process at most $K$ times in succession to get a matching without mismatched edges, thereby turning $M$ into a solution to $\mathscr{S}$ completing the proof.

Thus, without loss of generality, suppose we have $(L_{b_1}, g_1) \in M$, establishing that $g_1$ and $b_1$ are list-compatible, but $(b_1, L_{g_1}) \notin M$. If there is no edge in $M$ that includes $L_{g_1}$ then we can just replace whatever edge includes $b_1$ in $M$ with $(b_1, L_{g_1})$ and still have a maximum matching, but with fewer mismatched edges, which would complete the proof. Thus assume we have $(b_2, L_{g_1}) \in M$, for some element that we designate as $b_2$, with $b_2 \neq b_1$, establishing the list-compatibility of $g_1$ and $b_2$. Now suppose there were no edge in $M$ containing $L_{b_2}$. Then we could swap $(L_{b_1}, g_1)$ in $M$ for $(L_{b_2}, g_1)$, keep the maximum matching and again reduce the number of mismatched edges, completing the argument.  Thus, for some element that we denote by $g_2$, with $g_2 \neq g_1$, we may suppose we have $(L_{b_2}, g_2) \in M$.  As earlier, we must have some edge connected to $L_{g_2}$. If it is $b_1$ then we can replace $(b_2, L_{g_1}), (b_1, L_{g_2})$ with $(b_1, L_{g_1}), (b_2, L_{g_2})$, reducing the number of mismatched edges. Hence we must have some element $b_3 \notin \{b_1, b_2\}$ with $(b_3, L_{g_2}) \in M$. 

The general inductive step proceeds with an analogous back and forth / right hand side-left hand side argument. For the right hand side argument we assume we have inductively established edges $\{(L_{b_i}, g_i)\}_{i=1}^k \cup (b_{i+1}, L_{g_i})\}_{i=1}^{k-1} \subset M$. Then, if $L_{g_k}$ is not covered by any edge in $M$ we can swap $\{(b_{i+1}, L_{g_i})\}_{i=1}^{k-1}$ for $\{b_i, L_{g_i})\}_{i=2}^k$ in $M$, freeing up $L_{g_1}$, so that we can then also swap whatever edge includes $b_1$ in $M$ for $(b_1, L_{g_1})$. This swapping repairs all mismatched edges thus-far encountered and preserves the fact that $M$ is a maximum matching. Thus, in an effort to break our inductive argument, we may assume that $L_{g_k}$ is covered by an edge in $M$. If the edge is $(b_1, L_{g_k})$, as in Figure \ref{fig:mismatched_edges}, 
\begin{figure}[h] 
\centerline{\scalebox{0.25}{\includegraphics{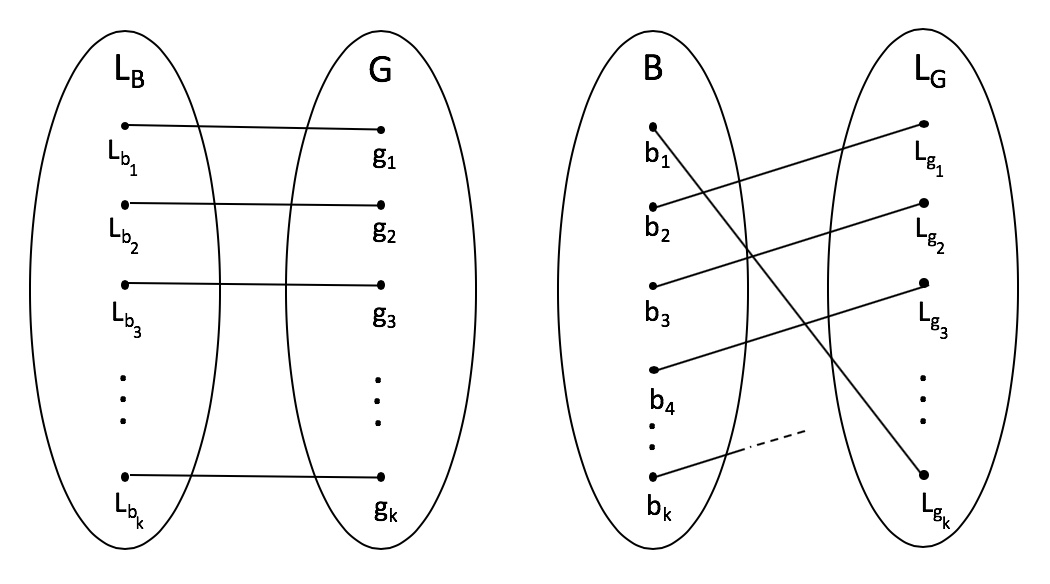}}}
\caption{Mismatched edges $\{(L_{b_i}), g_i\}_{i=1}^k \cup \{(b_{i+1}, L_{g_i})\}_{i=1}^{k-1} \cup \{(b_1, L_{g_k})\}$, as in the proof of Theorem \ref{thm:main}.} \label{fig:mismatched_edges}
\end{figure} 
then we can replace $\{(b_{i+1}, L_{g_i})\}_{i=1}^{k-1} \cup \{(b_1, L_{g_k})\}$ with $\{(b_i, L_{g_i})\}_{i=1}^k$, thereby repairing all mismatched edges thus far encountered. It follows that if there is to be a problem with our inductive argument then there must be an additional element $b_{k+1} \notin \{b_1,...,b_k\}$ with $(b_{k+1}, L_{g_k}) \in M$.

For the left hand side argument, we assume that we have inductively established the existence of edges $\{(L_{b_i}, g_i)\}_{i=1}^k \cup \{(b_{i+1}, L_{g_i})\}_{i=1}^k$. If $L_{b_{k+1}}$ is not covered by an edge of $M$ then we can swap $\{(L_{b_i}, g_i)\}_{i=1}^k$ for $\{(L_{b_{i+1}}, g_i)\}_{i=1}^k$ in $M$ and fix all mismatched edges.  It follows that there must be an additional element $g_{k+1} \notin \{g_1,...,g_k\}$ with $(L_{b_{k+1}}, g_{k+1}) \in M$.

Thus, in both the left and right hand side cases we conclude that there must be additional elements, $b_{k+1}$ to pair with $L_{g_k}$ on the right hand side, and $g_{k+1}$ to pair with $L_{b_{k+1}}$ on the left hand side. But there were only $K < \infty$ mismatched edges so the process must terminate in at most this many steps (a right hand, then left hand, argument comprising one such ``step''). The repair of all mismatched edges can therefore be made, after which we form the partial function $P: G \rightarrow B$ with $P(g) = b$ iff $(g,b) \in M$ or $(L_b, g) \in M$ (and so too, $(b, L_g) \in M$). $P$ thereby provides a solution to the SMP instance $\mathscr{S} = (G, B, \{B_g\}_{g \in G}, \{G_b\}_{b \in B})$, completing the proof. 
\end{proof}

The following is then an immediate consequence of Theorem \ref{thm:main}:

\begin{cor} \textbf{Hall's Bi-criteria for the SMP} \label{cor:2} Consider the SMP $\mathscr{S} = (G, B, \{B_g\}_{g \in G}, \{G_b\}_{b \in B})$.  $\mathscr{S}$ is solvable iff for every $\beta \subset B_L$,
\begin{equation} \label{eqn:hall1}
	|\cup_{b \in \beta} G^*_b| \geq |\beta|,
\end{equation}
and for every $\gamma \subset G_L$,
\begin{equation} \label{eqn:hall2}
	|\cup_{g \in \gamma} G^*_g| \geq |\gamma|.
\end{equation}
\end{cor}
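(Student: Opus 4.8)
The plan is to obtain the corollary by composing two equivalences we already have in hand, so that no fresh combinatorial work is needed: the structural reduction of Theorem \ref{thm:main} on the one side, and Hall's Marriage Theorem (Theorem \ref{thm:hmt_classical}) applied twice on the other. The substantive content — turning the two-sided matching of $\mathscr{S}$ into a pair of ordinary one-sided matchings — has already been discharged in the proof of Theorem \ref{thm:main}; all that remains is to rephrase each of those two CMP solvability statements as a Hall's Condition.

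First I would invoke Theorem \ref{thm:main} to replace ``$\mathscr{S}$ is solvable'' by ``both CMPs $\mathscr{C}_G = (G_L, B, \{B^*_g\}_{g \in G_L})$ and $\mathscr{C}_B = (B_L, G, \{G^*_b\}_{b \in B_L})$ are solvable.'' Next I would apply Theorem \ref{thm:hmt_classical} separately to each of these two instances. Since $G_L, B_L, G, B$ are all finite and $\{B^*_g\}_{g \in G_L}$ and $\{G^*_b\}_{b \in B_L}$ are finite collections of subsets of $B$ and $G$ respectively, the hypotheses of Hall's Theorem are met verbatim. Applied to $\mathscr{C}_G$ it gives solvability iff every $\gamma \subset G_L$ satisfies $|\gamma| \leq |\cup_{g \in \gamma} B^*_g|$, which is precisely inequality (\ref{eqn:hall2}); applied to $\mathscr{C}_B$ it gives solvability iff every $\beta \subset B_L$ satisfies $|\beta| \leq |\cup_{b \in \beta} G^*_b|$, which is precisely inequality (\ref{eqn:hall1}). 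Chaining the three equivalences then yields that $\mathscr{S}$ is solvable iff both (\ref{eqn:hall1}) and (\ref{eqn:hall2}) hold, as claimed.

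There is essentially no obstacle here beyond bookkeeping, since the corollary is a direct substitution of Hall's criterion into each of the two reduced CMPs; the only thing genuinely requiring a check is that the paring-down operations that produced $\{B^*_g\}$ and $\{G^*_b\}$ leave honest finite set systems, so that Theorem \ref{thm:hmt_classical} is legitimately applicable. The one point worth flagging for the reader is notational: the list that enters the Hall condition for $\mathscr{C}_G$ is the set $B^*_g$ of list-compatible \emph{boys}, matching the definition of $\mathscr{C}_G$, so inequality (\ref{eqn:hall2}) should be read with $B^*_g$ in place of the symbol as typeset. With that reading in place, the equivalence is immediate from Theorems \ref{thm:main} and \ref{thm:hmt_classical}.
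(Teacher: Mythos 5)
Your proposal is correct and is exactly the argument the paper intends: the corollary is stated as an immediate consequence of Theorem \ref{thm:main} combined with Theorem \ref{thm:hmt_classical} applied once to each of $\mathscr{C}_G$ and $\mathscr{C}_B$. You also rightly flag that inequality (\ref{eqn:hall2}) contains a typo and should read $|\cup_{g \in \gamma} B^*_g| \geq |\gamma|$, since $G^*$ is only defined for elements of $B_L$ and the list system of $\mathscr{C}_G$ is $\{B^*_g\}_{g \in G_L}$.
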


\bigskip

Just like with Hall's Theorem, Corollary \ref{cor:2} does not provide a polynomial time algorithm for solving the SMP. However, three observations are in order.  First, note that any SMP instance can be thought of as a \textit{maximum weight} bipartite matching problem.  Connect a vertex $g \in G$ with a vertex $b \in B$ via a weight $1$ edge if either $g \in G_L \land b \in B_g \setminus B_L$ or $b \in B_L \land g \in G_b \setminus G_L$. Additionally connect a vertex $g \in G$ with a vertex $b \in B$ via a weight $2$ edge if $g$ and $b$ are list compatible, i.e., $b \in B_g \land g \in G_b$.  A solution to this maximum weight matching problem of size $|G_L| + |B_L|$ is readily seen to be a solution to the associated SMP instance and so can be solved by the Hungarian Algorithm \cite{MUNKRES:1957}.  The Hungarian algorithm typically requires a complete graph and an equal number of vertices in each of the two partitions.  For this purpose, just add an additional vertices required to either $B$ or $G$ and connect any vertices that are not yet connected via edges of weight $0$. Various implementations of the Hungarian Algorithm run in time $O(V^3)$. 

The second observation is that the graph $G^*(\mathscr{S})$ considered in the proof of Theorem \ref{thm:main} is actually a bipartite graph, where the nodes in $G \cup L_G$ form one of the bipartitions and the nodes in $B \cup L_B$ form the other. Since finding a solution to the underlying SMP $\mathscr{S}$ is equivalent to finding a maximum matching in $G^*(\mathscr{S})$ and testing whether the cardinality of that matching is equal to $|G_L| + |B_L|$, such a solution can be found in time $O(\sqrt{V}E)$ as we have previously remarked. 

The third observation is that since $G^*(\mathscr{S})$ is actually a bipartite graph, and so we end up solving a bipartite matching problem, one might think that we could get away with just a one-sided application of Hall's Theorem. The problem, however, is that in the Classical Marriage Problem, not only are we looking for a maximum matching but we are looking to completely cover $G$ and hence the index set in Hall's Condition in Theorem \ref{thm:hmt_classical} is clear. In the case of the SMP, however, it is not evident, a priori, what that index set should be -- it may be $G_L$, it may be $G$, or something in between.

\medskip

Let us pause now to consider a toy application.  Suppose we have a $4n$ x $4n$ chess board and there is a ``row player'' and a ``column player." Further, suppose the row player picks at least $3n$ rows, and within each row places a $+1$ in at least $3n$ cells and a $-1$ in the remaining cells. In the remaining rows she places a $0$ in all cells. Further, to insure the ``$-1$''s are modestly well-balanced, she makes sure that not more than $n$ of these $-1$ entries appear in the same column. The column player makes analogous entries, but in each column. Call the rows where the row player entered non-zero values the ``non-zero rows'' and the columns where the column player entered non-zero values the ``non-zero columns.'' The question is whether one can pick cells with exactly one cell coming from each non-zero row and column, such that the sum of the two entries in each such cell is positive. The answer is yes, and the argument is an easy application of Corollary \ref{cor:2}. We associate the row indices with the elements of $G$ and the column indices with the elements of $B$. $g_i$ is then shorthand for the $i$th row and $b_j$ is shorthand for the $j$th column.  The elements of $G_L$ are the non-zero rows, and the associated elements of $B_{g_i}$ are the columns with indices associated with the cells in which the row player entered the number $+1$ in row $i$. $B_L$ and $G_{b_i}$ are defined analogously. A sum of $+2$ in cell $(i, j)$ corresponds to a case where row and column indices $g_i$ and $b_j$ are list compatible, while a sum of $+1$ means that either $b_j$ is on $g_i$'s list and $b_j$ has no list, or $g_i$ is on $b_j$'s list but $g_i$ has no list.

The arguments to see that the two Hall Criteria, (\ref{eqn:hall1}) and (\ref{eqn:hall2}) above, are satisfied are symmetrical, so we consider just condition (\ref{eqn:hall1}). If there is a Hall violator then there is a collection of $K$ non-zero rows with positive cells sums that are constrained to coming from cells in these rows and fewer than $K$ columns. Since a single non-zero row has at least $2n$ associated positive cell sums, a hypothetical Hall violator must be of size $K > 2n$. It follows that there must be a column with more than $2n$ non-positive cell sums. Such a column would have to be a non-zero column, since otherwise, by virtue of the distribution of the row player's $-1$s being ``modestly well-balanced'' and not appearing in more than $n$ cells in the same column, there would have to be at least $2n$ positive cell sums in such a column leaving no more than $2n$ non-positive cell sums. But now a non-zero column has at least $3n$ positive column values. This coupled with the fact that there can be at most $n$ negative row values, means there must be more than $2n$ positive cell sums, again leaving no more than $2n$ non-positive cell sums. The two Hall Criteria, (\ref{eqn:hall1}) and (\ref{eqn:hall2}), are therefore satisfied and we are guaranteed to find the needed cells with positive cell sums.

The most natural applications of Theorem \ref{thm:main} and Corollary \ref{cor:2} are probably to simultaneous assignment problems. For example, one may have a set of workers and a set of tasks, where the workers can take on just a single task at a time and all the tasks are single-person tasks.  Some of the tasks are mandatory and must get done, and some of the workers are paid and must be put to work, while some workers are volunteers. Not all workers can perform all tasks. Is there an assignment of workers to tasks such that all paid workers are put to work and all mandatory tasks are worked on?

\section{Infinite Versions}


The following infinite version of the Marriage Theorem is due to Ron Aharoni \cite{AHARONI:1984}:

\begin{theorem} \label{thm:infinite_marriage} 
Let $\mathscr{S}$ be a possibly infinite collection of finite subsets of a possibly infinite set $X$. If for any $\mathscr{W} \subset \mathscr{S}$, $|\mathscr{W}| \leq |\cup_{W \in \mathscr{W}} W|$ then there is an injective function $T:\mathscr{S} \rightarrow X$ such that $T(S) \in S$ for all $S \in \mathscr{S}$. Conversely, given a collection $\mathscr{S}$ of subsets of $X$, if such a function $T$ exists, then for any $\mathscr{W} \subset \mathscr{S}$, $|\mathscr{W}| \leq |\cup_{W \in \mathscr{W}} W|$.
\end{theorem}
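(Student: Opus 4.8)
The converse is immediate and mirrors the finite case: if $T$ is injective with $T(S) \in S$ for all $S$, then for any $\mathscr{W} \subset \mathscr{S}$ the restriction $T|_{\mathscr{W}}$ is an injection of $\mathscr{W}$ into $\cup_{W \in \mathscr{W}} W$, whence $|\mathscr{W}| \leq |\cup_{W \in \mathscr{W}} W|$. So the plan is to concentrate on the forward direction, and the approach I would take is a compactness argument that bootstraps the \emph{finite} Marriage Theorem (Theorem \ref{thm:hmt_classical}) up to the infinite setting, exploiting crucially that every $S \in \mathscr{S}$ is finite.

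First I would record two simple reductions. Applying the hypothesis to singletons forces each $S \neq \varnothing$, and restricting the hypothesis to \emph{finite} subfamilies recovers exactly Hall's Condition for them, so Theorem \ref{thm:hmt_classical} guarantees that every finite subcollection $\mathscr{F} \subset \mathscr{S}$ admits an injective choice function $g_{\mathscr{F}}$ with $g_{\mathscr{F}}(S) \in S$. These are the only consequences of the hypothesis I will use; in particular the infinite-cardinality instances of the hypothesis are not actually needed for the forward direction, which is a point worth flagging.

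Next I would set up the topological space of all choice functions. Let $\Omega = \prod_{S \in \mathscr{S}} S$, giving each finite factor the discrete topology; each factor is then compact, so by Tychonoff's theorem $\Omega$ is compact, and a point of $\Omega$ is precisely a function $f$ with $f(S) \in S$ for every $S$. For each pair $S \neq S'$ define
\[
  C_{S,S'} = \{\, f \in \Omega : f(S) \neq f(S') \,\},
\]
the set of choice functions separating $S$ and $S'$. Each $C_{S,S'}$ depends on only two coordinates and is therefore clopen, hence closed, in $\Omega$. A function lying in $\bigcap_{S \neq S'} C_{S,S'}$ is exactly an injective choice function, i.e.\ the desired $T$, so it suffices to show this intersection is nonempty. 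Finally I would invoke the finite intersection property: given finitely many constraints $C_{S_1,S_1'}, \dots, C_{S_n,S_n'}$, let $\mathscr{F}$ be the finite subcollection of all sets appearing among the $S_i, S_i'$; the finite case yields an injective $g_{\mathscr{F}}$ on $\mathscr{F}$, and extending it by an arbitrary element of each remaining $S$ (possible since each is nonempty) gives a point of $\Omega$ lying in every $C_{S_i,S_i'}$, since the relevant coordinates all sit inside $\mathscr{F}$. Thus every finite intersection is nonempty, and compactness forces $\bigcap_{S \neq S'} C_{S,S'} \neq \varnothing$.

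The step I expect to be the genuine crux — and the only place where the finiteness of each $S$ is essential — is the compactness of $\Omega$: finiteness of each $S$ is exactly what makes each factor, and hence the product, compact, and it is this that lets the purely finite applications of Hall's Theorem assemble into a single global selection. One should also be mindful that the argument uses the Axiom of Choice (both in Tychonoff's theorem and in the arbitrary extension), which is unavoidable at this level of generality.
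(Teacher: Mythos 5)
Your proof is correct, but note that the paper does not actually prove this statement at all: it imports it from the literature (citing Aharoni), so there is no internal proof to compare against. Your compactness argument is the standard route to this result (due in essence to M.~Hall, Jr.): the converse by restriction, the reduction to finite subfamilies via Theorem \ref{thm:hmt_classical}, the product $\Omega = \prod_{S \in \mathscr{S}} S$ of finite discrete spaces made compact by Tychonoff, the clopen separation sets $C_{S,S'}$, and the finite intersection property verified by extending a finite injective transversal arbitrarily --- each step checks out, and your observation that only the \emph{finite} instances of the hypothesis are used (the infinite-cardinality instances being automatic consequences once $T$ exists) is accurate and worth flagging. What your approach buys over a bare citation is a self-contained proof that isolates exactly where finiteness of each $S$ enters, namely compactness of the factors; the standard counterexample the paper itself records (Marshall Hall's $S_0 = X$, $S_i = \{i\}$) shows this is not removable. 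Two small refinements: your closing remark slightly overstates the choice requirements --- Tychonoff for products of finite discrete spaces needs only the ultrafilter lemma (Boolean prime ideal theorem), not full AC, though you are right that some choice principle is unavoidable here (already selecting one point of $\Omega$ requires choice for families of finite sets, and the FIP convention needs $\Omega \neq \varnothing$, i.e., the empty subfamily case, which that same selection handles); and when invoking Theorem \ref{thm:hmt_classical} for a finite subfamily $\mathscr{F}$ you should take the ambient finite set to be $\cup_{S \in \mathscr{F}} S$ so the theorem as stated literally applies. Neither point is a gap.
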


The inequalities in Theorem \ref{thm:infinite_marriage} are taken in the sense of (possibly infinite) cardinalities. In other words, $|\mathscr{W}| \leq |\cup_{W \in \mathscr{W}} W|$ means there is an injective function from $\mathscr{W}$ to $\cup_{W \in \mathscr{W}} W$. 

\medskip

The group theorist Marshall Hall (no relation to Philip Hall) observed in \cite{MHALL:1986} that Theorem \ref{thm:infinite_marriage} does not hold if we allow $\mathscr{S}$ to contain \textit{infinite} subsets of $X$. If $X = \{1,2,3,...\}$ and $S_0 = X, S_1 = \{1\}, S_2 = \{2\},...$ with $\mathscr{S} = \{S_0, S_1,...\}$, then $\mathscr{S}$ satisfies the infinite version of Hall's Condition but there is no transversal $T:\mathscr{S} \rightarrow X$ such that $T(S_i) \in S_i$ for all $S_i \in \mathscr{S}$.

\medskip

Returning now to the SMP, the \textit{infinite} Symmetric Marriage Problem is precisely the same as the original SMP just with possibly infinite sets. In other words, in the instance $\mathscr{S} = (G, B, \{B_g\}_{g \in G}, \{G_b\}_{b \in B})$, the sets $G$ and $B$ may each be infinite, for any $g \in G, B_g$ may be infinite, and for any $b \in B, G_b$ may be infinite. As earlier a solution to $\mathscr{S}$ is an injective partial function $P:G \rightarrow B$ such that  $G_L \subset \textrm{Domain}(P), B_L \subset \textrm{Range}(P)$ and $P(g) \in B_g~\forall g \in G_L$ and $P^{-1}(b) \in G_b~\forall b \in B_L.$. Note that at this point we are \textit{not} constraining the individual sets $B_g$ or $G_b$ to be finite, as in the statement of Theorem \ref{thm:infinite_marriage}.

Given an infinite SMP $\mathscr{S}$ we obviously can't say anything about the runtime for finding such an $P$. However, recalling our earlier definition of $B^*_g$, for $g \in G_L$, as the set $\{b \in B_g: b \notin B_L \vee (b \in B_L \land g \in G_b)\}$ and analogously for $b \in B_L,~G^*_b = \{g \in G_b: g \notin G_L \vee (g \in G_L \land b \in B_g)\}$, then we do have the following:

\begin{theorem} \label{thm:infinite_smp}
Consider the possibly infinite SMP $\mathscr{S} = (G, B, \{B_g\}_{g \in G}, \{G_b\}_{b \in B})$. $\mathscr{S}$ is solvable iff the respective possibly infinite CMPs $\mathscr{C}_G = (G_L, B, \{B^*_g\}_{g \in G_L})$ and $\mathscr{C}_B = (B_L, G, \{G^*_b\}_{b \in B_L})$ are both solvable.
\end{theorem}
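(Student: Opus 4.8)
The plan is to keep the two-part structure of Theorem~\ref{thm:main} but to replace its iterative mismatch-repair argument — which need not terminate once the sets are infinite — with a non-iterative, component-local construction. The \emph{only if} direction carries over verbatim and uses no finiteness: if $P$ solves $\mathscr{S}$, then $P$ restricted to $G_L$ solves $\mathscr{C}_G$ and $P^{-1}$ restricted to $B_L$ solves $\mathscr{C}_B$. The only point to check is that $g \in G_L$ forces $P(g)\in B^*_g$: if $P(g)=b\in B_L$ then $b\in\mathrm{Range}(P)$ forces $P^{-1}(b)=g\in G_b$, i.e.\ $g$ and $b$ are list-compatible, while if $b\notin B_L$ then $b\in B^*_g$ trivially; the statement for $B_L$ is symmetric.

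For the \emph{if} direction I would first recast SMP-solvability as a single bipartite matching problem, exactly as in the first observation following Corollary~\ref{cor:2}. Let $H$ be the bipartite graph with left set $G$ and right set $B$ whose edges are the pairs $(g,b)$ with either (A) $g\in G_L$ and $b\in B_g\setminus B_L$, (B) $b\in B_L$ and $g\in G_b\setminus G_L$, or (C) $g$ and $b$ list-compatible. A matching $N$ of $H$ saturating $G_L\cup B_L$ is the same thing as a solution of $\mathscr{S}$: setting $P(g)=b$ for $(g,b)\in N$, edges of types (A),(C) give $P(g)\in B_g$ for $g\in G_L$, edges of types (B),(C) give $P^{-1}(b)\in G_b$ for $b\in B_L$, the containments $G_L\subset\mathrm{Dom}(P)$ and $B_L\subset\mathrm{Range}(P)$ follow from saturation, and injectivity is automatic. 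The key point is that a solution $T_G$ of $\mathscr{C}_G$ is \emph{exactly} a matching of $H$ saturating $G_L$ (each value $T_G(g)\in B^*_g$ is reached along a type-(A) or type-(C) edge), and likewise a solution $T_B$ of $\mathscr{C}_B$ is exactly a matching of $H$ saturating $B_L$. Thus the whole \emph{if} direction reduces to the purely graph-theoretic assertion that, in a bipartite graph, a matching $M_1$ saturating a subset $S$ of one side and a matching $M_2$ saturating a subset $T$ of the other side can be merged into a single matching saturating $S\cup T$. This is the Mendelsohn--Dulmage theorem, and the essential observation is that its proof requires no finiteness hypothesis.

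To establish the merging statement in the infinite setting I would pass to the symmetric difference $M_1\triangle M_2$. Since each vertex meets at most one edge of each matching, $M_1\triangle M_2$ has maximum degree $2$ and decomposes into components that are alternating paths (finite, one-way infinite, or two-way infinite) and finite even cycles; this structural fact holds regardless of the size of the ambient graph. I would then assemble the merged matching one component at a time: take all common edges $M_1\cap M_2$, take the $M_1$-edges of each cycle, and on each path take the near-perfect matching that covers its \emph{required} endpoint. The lemma that makes this work is that each path component has at most one required endpoint, because a required left-vertex of $S$ must be $M_1$-saturated and a required right-vertex of $T$ must be $M_2$-saturated, so an endpoint — saturated by exactly one of the two matchings — can be required only when that matching is the one responsible for its side, and a short alternation/parity count along the path rules out both endpoints being of that kind simultaneously. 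Since every interior (and two-way-infinite-path) vertex is covered by \emph{either} choice, the per-component selection covers all required vertices, and the disjoint choices assemble into one matching of $H$ saturating $G_L\cup B_L$, hence into a solution of $\mathscr{S}$.

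The step I expect to be the main obstacle is precisely the treatment of the infinite path components, as this is exactly where the finite proof's ``run the repair process at most $K$ times'' reasoning fails: an alternating chain of mismatched edges can now be infinite and never close up. Replacing iteration by the one-shot, component-wise choice removes the termination problem, but it must be justified with care — in particular one must verify the at-most-one-required-endpoint lemma (including the degenerate two-way infinite case, where there are no endpoints and either matching serves), and confirm that the vertex-disjoint per-component selections, together with the common edges, genuinely glue into a single matching that saturates all of $G_L\cup B_L$.
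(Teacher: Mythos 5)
Your proof is correct, and it reaches the conclusion by a genuinely different route than the paper. The paper stays inside its auxiliary four-partition graph $G^*(\mathscr{S})$ (with the list-nodes $L_G, L_B$), combines the two vertex-disjoint matchings there, and repairs the resulting chains of ``mismatched edges'' directly --- finite chains by the iterative swap argument inherited from Theorem \ref{thm:main}, and one-way and two-way infinite chains by a single global shift $(g_i, L_{b_i}) \mapsto (g_i, L_{b_{i+1}})$, noting as you do that the chains are disjoint and the repairs canonical, so no Axiom of Choice is needed. You instead collapse everything into the natural bipartite graph $H$ on $G \cup B$ with the three edge types (A), (B), (C) --- essentially the weighted-matching graph from the paper's first observation after Corollary \ref{cor:2} --- observe that solutions of $\mathscr{C}_G$ and $\mathscr{C}_B$ are exactly matchings of $H$ saturating $G_L$ and $B_L$ respectively, and then invoke the Mendelsohn--Dulmage merging theorem, proved for infinite graphs via the symmetric difference $M_1 \triangle M_2$: the decomposition into alternating finite paths, even cycles, one-way and two-way infinite paths is the direct analogue of the paper's maximal chains of mismatched edges, and your parity argument showing each path component has at most one required endpoint (checked correctly in all three cases: two left endpoints or two right endpoints force an even-length path whose end-edges lie in different matchings, while one of each forces an odd-length path whose end-edges lie in the same matching) is what replaces the paper's case analysis of where a chain can terminate. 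What your route buys is uniformity and recognizability: the finite and infinite cases are handled by one component-wise selection with no iteration, the termination worry disappears by construction, and the key step is identified as a classical theorem whose proof is visibly finiteness-free; your choices are also canonical per component, so you match the paper's observation that no choice principle is needed. What the paper's route buys is that the auxiliary graph makes the two given matchings automatically vertex-disjoint and makes the ``mismatch'' condition (an edge $(L_b, g)$ without its partner $(b, L_g)$) completely explicit, at the cost of a more bespoke repair argument. One cosmetic remark: on odd-edge finite paths and one-way infinite paths the alternating matching you select is in fact perfect on the component rather than merely ``near-perfect,'' which only strengthens your saturation claim.
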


\begin{proof}
If $\mathscr{S}$ is solvable then by definition $\mathscr{C}_G$ and $\mathscr{C}_B$ are both solvable, so assume that $\mathscr{C}_G$ and $\mathscr{C}_B$ are each solvable. The argument is similar to the proof of Theorem \ref{thm:main}. We again consider the graph $G^*(\mathscr{S})$ consisting of the four groups of nodes $G, B, L_G, L_B$ (refer back to Figures \ref{fig:4_partitions} and \ref{fig:mismatched_edges}), where we recall that the nodes of $L_G$ are representatives of the lists $L_g = \{b \in B_g : g \in G_b\}$ for $g \in G_L$, and analogously, the nodes of $L_B$ are representatives of the lists $L_b = \{g \in G_b : b \in B_g\}$ for $b \in B_L$.

As in the proof of Theorem \ref{thm:main} we must show how to take the matchings that are given by the solutions to $\mathscr{C}_G$ and $\mathscr{C}_B$ and turn them into a valid solution to $\mathscr{S}$. We are therefore again in the business of repairing the mismatched edges as we had previously defined them. Starting with any mismatched edge, we either run through the analysis we previously provided, leading to a finite sequence that repairs any encountered mismatched edges, or we arrive at an infinite sequence of mismatches as in Figure \ref{fig:infinite_mismatch}. 
\begin{figure}[h] 
\centerline{\scalebox{0.23}{\includegraphics{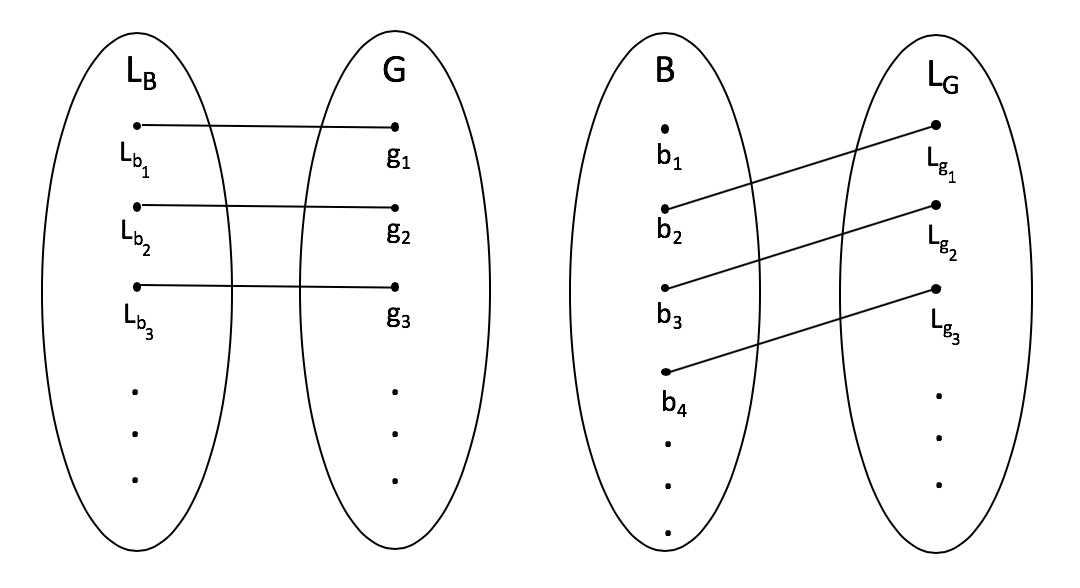}}}
\caption{An infinite sequence of mismatched edges.}. \label{fig:infinite_mismatch}
\end{figure} 
There are now two cases: (i) there is a first mismatch, i.e., either $(g_1, L_{b_1}) \in M$ but $(L_{g_1}, b_1) \notin M$, or we can back up from Figure \ref{fig:infinite_mismatch} to such a mismatch, or (ii) the example in Figure \ref{fig:infinite_mismatch} is part of a two-way infinite sequence of the form shown in Figure \ref{fig:2_way_infinite_mismatch}. 
\begin{figure}[h] 
\centerline{\scalebox{0.23}{\includegraphics{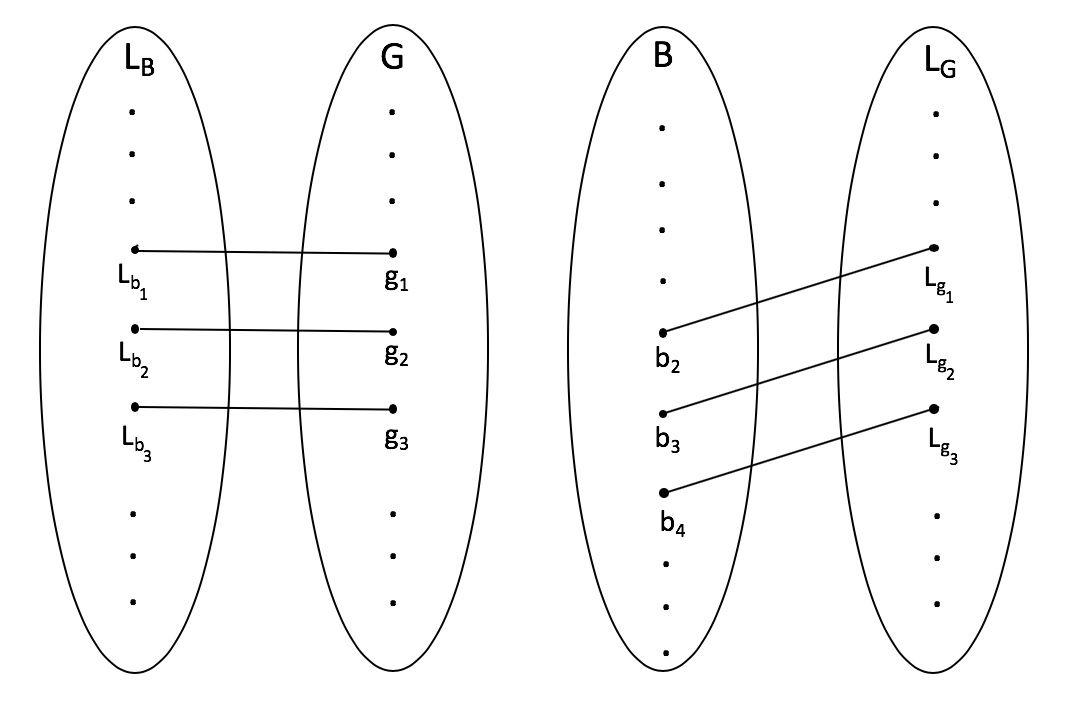}}}
\caption{A two-way infinite sequence of mismatched edges.}. \label{fig:2_way_infinite_mismatch}
\end{figure} 
In case (i) in which there is a first mismatch, assume without loss of generality that the first mismatch is $(g_1, L_{b_1}) \in M$ but $(L_{g_1}, b_1) \notin M$. 
In this case we can swap all edges $\{(g_i,L_{b_i})\}_{i=1}^\infty$ for $\{(g_i,L_{b_{i+1}})\}_{i=1}^\infty$, retaining the matching and repairing all mismatched edges. In case (ii) in which we have a two-way infinite sequence of mismatched edges, we can do the same, replacing each edge $(g_i,L_{b_i})$ with the associated edge $(g_i,L_{b_{i+1}})$, again retaining the maximum matching and fixing all the mismatched edges.

We note that all these maximal finite, maximal one-way infinite, and two-way infinite series of mismatched edges are non-overlapping so that all these operations can be performed independently, and, therefore, set theoretically (though not algorithmically) together. In fact, this process, despite working for arbitrary infinite cardinalities, does not require the Axiom of Choice. By repairing all mismatched edges we thereby turn a solution to $\mathscr{C}_G$ and $\mathscr{C}_B$ into a solution to $\mathscr{S}$ and the proof is therefore complete.
\end{proof}

Theorem \ref{thm:infinite_smp} together with the infinite version of Hall's Theorem (Theorem \ref{thm:infinite_marriage}) immediately imply the following. Note that it is here that we must introduce the assumptios that each $B_g$ and $G_b$ is finite.

\begin{cor} \textbf{Hall's Bi-criteria for the Infinite SMP}\label{cor:3} Consider the infinite SMP $\mathscr{S} = (G, B, \{B_g\}_{g \in G}, \{G_b\}_{b \in B})$ where each $B_g$ and each $G_b$ is \textit{finite}.  Then $\mathscr{S}$ is solvable iff for every $\beta \subset B_L$,
\begin{equation}
	|\cup_{b \in \beta} G^*_b| \geq |\beta|,
\end{equation}
and for every $\gamma \subset G_L$,
\begin{equation}
	|\cup_{g \in \gamma} G^*_g| \geq |\gamma|,
\end{equation}
\end{cor}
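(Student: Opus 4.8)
The plan is to chain together the two results stated immediately before the corollary. Theorem \ref{thm:infinite_smp} reduces solvability of the SMP instance $\mathscr{S}$ to the simultaneous solvability of the two CMP instances $\mathscr{C}_G = (G_L, B, \{B^*_g\}_{g \in G_L})$ and $\mathscr{C}_B = (B_L, G, \{G^*_b\}_{b \in B_L})$, while Theorem \ref{thm:infinite_marriage} (Aharoni's infinite marriage theorem) characterizes solvability of each individual CMP by an infinite Hall condition. Since the conjunction of the two resulting Hall conditions is exactly the pair of displayed inequalities, the corollary will follow at once.

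Concretely, I would first invoke Theorem \ref{thm:infinite_smp} to replace the statement ``$\mathscr{S}$ is solvable'' by the equivalent statement ``$\mathscr{C}_G$ and $\mathscr{C}_B$ are both solvable.'' I would then apply Theorem \ref{thm:infinite_marriage} separately to each CMP. For $\mathscr{C}_B$ the relevant family of sets is $\{G^*_b\}_{b \in B_L}$, a collection of subsets of the ground set $G$; Aharoni's theorem gives that $\mathscr{C}_B$ is solvable iff $|\beta| \leq |\cup_{b \in \beta} G^*_b|$ for every $\beta \subset B_L$, which is the first displayed inequality. The symmetric application to $\mathscr{C}_G$, whose sets $\{B^*_g\}_{g \in G_L}$ are subsets of $B$, yields $|\gamma| \leq |\cup_{g \in \gamma} B^*_g|$ for every $\gamma \subset G_L$, matching the second displayed inequality (where the starred sets are the girls' pared-down lists $B^*_g$). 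Taking the conjunction of the two equivalences closes the argument, all cardinal inequalities being read in the sense of injections, as spelled out after Theorem \ref{thm:infinite_marriage}.

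The one genuine point to verify — and precisely the reason the hypothesis that each $B_g$ and each $G_b$ is finite has been imposed — is that Theorem \ref{thm:infinite_marriage} requires every set in the collection to be \emph{finite}; this is exactly where Marshall Hall's counterexample would otherwise bite. Here the check is immediate: by construction $G^*_b \subset G_b$ and $B^*_g \subset B_g$, so finiteness of the individual lists $G_b$ and $B_g$ passes directly to the pared-down lists, and Aharoni's theorem is applicable to both CMPs. I would emphasize that the index sets $G_L$ and $B_L$ themselves remain unrestricted and may be infinite; the finiteness hypothesis is needed only set by set, not on the index sets. With this observation in place the remainder is routine bookkeeping, so the main (mild) obstacle is one of hypothesis-tracking rather than construction.
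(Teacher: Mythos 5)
Your proposal is correct and follows exactly the paper's intended argument: the paper derives Corollary \ref{cor:3} immediately by combining Theorem \ref{thm:infinite_smp} with Theorem \ref{thm:infinite_marriage}, noting (as you do) that the finiteness of each $B_g$ and $G_b$ is imposed precisely so that Aharoni's theorem applies to the pared-down lists $B^*_g \subset B_g$ and $G^*_b \subset G_b$. Your observation that the second displayed inequality should read $B^*_g$ rather than $G^*_g$ correctly identifies a typo in the paper's statement.
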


\section{Additional Considerations and Questions}

What happens if we try to obtain a Hall-like theorem for generic matching, i.e., what one might call the Non-Binary Marriage Problem, where anyone can opt to marry anyone else? If everyone makes a list of who they are willing to marry 
then we can test whether there is a matching that makes everyone happy simply by testing whether there is a perfect matching, now in a generic/not necessarily bipartite graph -- something we can do in polynomial time using the Edmonds blossom algorithm \cite{EDMONDS:1965}. The fastest deterministic algorithm to solve this problem is a variation on Edmonds' algorithm due to Micali and Vazirani \cite{MICALI:1980}. Moreover, the existence of such a matching can be guaranteed by the well known generalization of Hall's Theorem known as Tutte's Theorem \cite{BONDY:1976, LOVASZ:1986}. 

\begin{theorem} \textbf{Tutte's Theorem} 
A graph, $G = (V, E)$, has a perfect matching iff for every $U \subset V$, the subgraph induced by $V \setminus U$ has at most $|U|$ connected components with an odd number of vertices.
\end{theorem}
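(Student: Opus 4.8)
The plan is to prove both directions, taking the forward (necessity) direction as a short counting argument and devoting the bulk of the work to sufficiency. Throughout, for a graph $H$ let $o(H)$ denote the number of connected components of $H$ having an odd number of vertices, so that Tutte's condition reads $o(G - U) \le |U|$ for every $U \subseteq V$. For necessity, suppose $G$ has a perfect matching $M$ and fix $U \subseteq V$. Each odd component $C$ of $G - U$ has an odd number of vertices, so $M$ cannot match all of $C$ internally; at least one vertex of $C$ is matched by $M$ to a vertex outside $C$, and since no edge of $G - U$ leaves its component, that partner lies in $U$. Distinct odd components use distinct vertices of $U$, whence $o(G - U) \le |U|$.

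For sufficiency I would argue by contradiction via an edge-maximal reduction. Suppose $G$ satisfies Tutte's condition but has no perfect matching; taking $U = \varnothing$ shows $G$ has no odd component, so $|V|$ is even. First I would record that adding an edge preserves Tutte's condition: a single edge merges at most two components, and checking the parity cases shows $o((G + e) - U) \le o(G - U)$ for every $U$. Hence I may add edges one at a time until I reach an edge-maximal graph that still satisfies Tutte's condition and has no perfect matching, yet for which adding any missing edge creates one. Replacing $G$ by this graph, let $U$ be the set of vertices adjacent to all others. The heart of the proof is the claim that \emph{every component of $G - U$ is complete.}

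To establish the claim I would suppose some component fails to be a clique and derive a contradiction. A graph is a disjoint union of cliques exactly when it has no induced path on three vertices, so there exist $x, y, z$ with $xy, yz \in E$ but $xz \notin E$; moreover $y \notin U$ supplies a vertex $w$ with $yw \notin E$. By edge-maximality $G + xz$ and $G + yw$ have perfect matchings $M_1$ and $M_2$, and since $G$ itself has none we must have $xz \in M_1$ and $yw \in M_2$. I would then analyze the symmetric difference $M_1 \triangle M_2$, whose components are isolated vertices and even alternating cycles, locate the cycle(s) carrying $xz$ and $yw$, and splice together arcs of $M_1$ and $M_2$ using the genuine edges $xy$ or $yz$ to produce a perfect matching of $G$ — the contradiction. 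This case analysis, tracking whether $xz$ and $yw$ lie on the same alternating cycle or on two different ones, is the main obstacle and the only genuinely delicate step.

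Granting the claim, I would finish by exhibiting a perfect matching of $G$, contradicting the assumption. Since $o(G - U) \le |U|$ and every vertex of $U$ is adjacent to everything, I would match one vertex from each odd component of $G - U$ to a distinct vertex of $U$ (this is immediate, or one may invoke Hall's Theorem, Theorem~\ref{thm:hmt_classical}). The remaining vertices of each component then form an even-sized clique and are matched internally, while the leftover vertices of $U$ form a clique whose size is even (by the parity of $|V|$) and are likewise matched internally. Assembling these pieces yields a perfect matching of $G$, the desired contradiction, completing the proof.
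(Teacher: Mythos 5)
The paper does not prove Tutte's Theorem at all: it is imported as a known result, with citations to \cite{BONDY:1976, LOVASZ:1986}, so there is no in-paper argument to compare against. Your proposal is the classical edge-maximality proof of Lov\'asz, essentially the one found in those references. Its skeleton is sound throughout: the necessity count is right; the observation that adding an edge cannot increase $o(G-U)$ is right (merging two odd components removes two from the count, any other merge preserves it); taking $U$ to be the set of universal vertices in an edge-maximal counterexample is the correct move; and the final assembly works, including the parity point that the leftover part of $U$ has size $|U| - o(G-U) \equiv |U| + |V\setminus U| = |V| \equiv 0 \pmod 2$ and is a clique because its vertices are universal.

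The one genuine gap is that the step you yourself flag as ``the main obstacle'' --- the symmetric-difference splice proving that every component of $G-U$ is a clique --- is a plan rather than a proof, and it is the step the entire argument exists to enable. For the record, it does close, and quickly: note first $w \notin \{x,z\}$ since $xy, yz \in E$. Because $xz \notin E(G+yw)$ and $yw \notin E(G+xz)$, both edges lie in $M_1 \triangle M_2$, whose components are even alternating cycles. If $yw$ lies on a cycle $C$ not containing $xz$, then $M_2 \triangle C$ is a perfect matching of $G$, a contradiction. If both lie on the same cycle $C$, traverse $C$ from $y$ along $yw$ and let $v$ be the first vertex of $\{x,z\}$ reached, giving an arc $P$ from $y$ to $v$. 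The edge of $P$ entering $v$ must be $v$'s $M_2$-edge (its $M_1$-edge is $xz$, whose other endpoint has not yet been visited), so $P$ starts and ends with $M_2$-edges; since $v \neq w$, the chord $vy \in \{xy, zy\} \subseteq E(G)$ is not an edge of $P$, and $vy \notin M_2$ because $y$'s $M_2$-edge is $yw$. Hence $P + vy$ is a cycle alternating with respect to $M_2$, and $M_2 \triangle (P + vy)$ is a perfect matching of $G$ containing neither $xz$ (not on $P$) nor $yw$ (discarded) --- the desired contradiction. With this case split written out, your proof is complete; without it, what you have is a correct outline of the standard argument rather than a proof.
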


%
But what if some of the individuals in our non-binary marriage problem are allowed to specify that they are willing to marry anyone or no one? A solution is a matching that covers everyone with hard requirements.

Let us adopt notation for this problem that is akin to the notation we used in the binary case. Let $U$ denote the set of all individuals. We assume that $U$ is finite. As in the SMP we may assume that there is no one that wishes specifically not to be married to anyone.
Denote by $U_u$ the set of elements in $U$ that $u \in U$ is willing to marry. Let $U_L = \{u \in U: U_u \neq \varnothing\}$. $U_u = \varnothing$ has the special meaning that $u$ is willing to marry anyone or no one at all. A Non-Binary Marriage Problem (NBMP) instance is then given by the ordered pair $\mathscr{N} = (U, \{U_u\}_{u \in U})$.

Recall the for a given SMP $\mathscr{S} = (G, B, \{B_g\}_{g \in G}, \{G_b\}_{b \in B})$, $\mathscr{S}$ has a solution iff the two CMP instances $\mathscr{C}_G = (G_L, B, \{B^*_g\}_{g \in G_L})$ and $\mathscr{C}_B = (B_L, G, \{G^*_b\}_{b \in B_L})$ both have solutions. To this end, let us adopt the approach taken in the proof of Theorem \ref{thm:main} and consider two sets, $U$ and $L_U$. $U$ is, again, the set of all individuals, and $L_U$ contains a node, $L_u$, for each $u \in U_L$, and we think of $L_u$ as representing the set $\{u' \in U_L: u' \in U_u \land u \in U_{u'}\}$. We connect elements $u,u' \in U$ with an edge if either $u \in U_L \land u' \notin U_L \land u' \in U_u$ or  $u' \in U_L \land u \notin U_L \land u \in U_{u'}$. Further we connect an element $u \in U$ with an element $L_{u'} \in L_U$ iff $u \in L_{u'}$. Given an original problem instance $\mathscr{N} = (U, \{U_u\}_{u \in U})$, call the resulting graph $G^*(\mathscr{N})$. We would now like to argue that $\mathscr{N}$ is solvable iff there is a maximum matching in  $G^*(\mathscr{N})$ that covers $U_L$. However, the argument of Theorem \ref{thm:main} does not carry over. Consider the problem instance $U = \{u_1, u_2, u_3\}, L_{u_1} = \{u_2, u_3\}, L_{u_2} = \{u_1, u_3\},  L_{u_3} = \{u_1, u_2\}$. There are no elements of $U$ without lists so a matching must pair elements of $U$ with elements of $L_U$. One such maximum matching is $M = \{(u_1, L_{u_2}), (u_2, L_{u_3}), (u_3, L_{u_1})\}$. $M$ covers $U_L$. However, all of the edges of $M$ are mismatched in the sense that for each $(u_i, L_{u_j}) \in M$ there is not a corresponding edge $(u_j, L_{u_i}) \in M$.  Further, there is no way to repair all edges while still maintaining a matching covering $L_U$, since a ``repaired'' set of edges will have even cardinality. 

A more promising direction seems to be to find a suitable replacement for Tutte's condition and thereby a version of Tutte's Theorem that would apply to the NBMP so that we could conclude that under such a condition there must be a matching that covers $U_L$ in the graph $G^*(\mathscr{N})$.
As with the SMP one can turn an NBMP instance into a maximum weighted matching problem 
(though of course not a maximum weighted \textit{bipartite} matching problem)
and use known polynomial time algorithms to find a solution -- in fact the Micali and Vazirani algorithm mentioned earlier works for this problem as well \cite{MICALI:1980}.  

\medskip
%

Since the Symmetric Marriage Problem is a special case of Weighted Bipartite Matching it is natural to ask whether there are other, possibly more general, instances of Weighted Bipartite Matching for which a solution exists (in other words, a weighted matching reaches a certain threshold) iff a two-sided variation on Hall's Condition holds.

\section*{Acknowledgements}
The author gratefully acknowledges members of the Courant Geometry Seminar and especially Joe Malkevitch. Joe liked the Symmetric Marriage Problem enough to suggest the writing of this paper and provided valuable advise during the editing. 

\bibliographystyle{plain}
\bibliography{smp_soda_arxiv}

\begin{thebibliography}{10}

\bibitem{AHARONI:1984}
Ron Aharoni.
\newblock K{\"o}nig's duality theorem for infinite bipartite graphs.
\newblock {\em Journal of the London Mathematical Society}, 29(2):1--12, 1984.

\bibitem{BONDY:1976}
J.~A. Bondy and U.~S.~R. Murty.
\newblock {\em Graph Theory with Applications}.
\newblock American Elsevier Publishing Co., New York, 1976.

\bibitem{brilliant:hall-apps}
{Brilliant.org}.
\newblock Applications of {H}all's marriage theorem.
\newblock
  \url{https://brilliant.org/wiki/applications-of-hall-marriage-theorem/},
  2019.
\newblock [Online; accessed 1-July-2019].

\bibitem{brilliant:hall}
{Brilliant.org}.
\newblock Hall's marriage theorem.
\newblock \url{https://brilliant.org/wiki/hall-marriage-theorem/}, 2019.
\newblock [Online; accessed 1-July-2019].

\bibitem{CORMEN:1999}
Thomas~H. Cormen, Charles~E. Leiserson, and Ronald~L. Rivest.
\newblock {\em Introduction to Algorithms}.
\newblock MIT Press, Cambridge, Massachusetts, 1999.

\bibitem{CYGAN:2012}
Marek Cygan, Taghi Hajiaghayi, and Samir Khuller.
\newblock {LP} rounding for k-centers with non-uniform hard capacities.
\newblock {\em Proceedings of the 2012 IEEE 53rd Annual Symposium on
  Foundations of Computer Science}, 2012.

\bibitem{EDMONDS:1965}
Jack Edmonds.
\newblock Paths, trees, and flowers.
\newblock {\em Canadian Journal of Mathematics}, 17:449--467, 1965.

\bibitem{EDMONDS:1972}
Jack Edmonds and Richard~M. Karp.
\newblock Theoretical improvements in algorithmic efficiency for network flow
  problems.
\newblock {\em Journal of the ACM}, 19(2):248--264, 1972.

\bibitem{MHALL:1986}
Marhsall Hall.
\newblock {\em Combinatorial Theory}.
\newblock John Wiley \& Sons, New York, 2nd edition, 1986.

\bibitem{HALL:1935}
Philip Hall.
\newblock On representatives of subsets.
\newblock {\em Journal of the London Math Society}, 10(1):229--242, 1935.

\bibitem{H-K:1973}
John~E. Hopcroft and Richard~M. Karp.
\newblock An $n^{5/2}$ algorithm for maximum matchings in bipartite graphs.
\newblock {\em SIAM Journal of Computing}, 2(4):225--231, 1973.

\bibitem{PUTNAM:2012}
Leonard~F. Klosinski, Gerald~L. Alexanderson, and Mark Krusemeyer.
\newblock The seventy-second {W}illiam {L}owell {P}utnam mathematical
  competition (problem {B}3).
\newblock {\em The American Mathematical Monthly}, 119(8):623--635, 2012.

\bibitem{LOVASZ:1986}
L\'{a}szl\'{o} Lov\'{a}sz and M.~D. Plummer.
\newblock {\em Matching theory}.
\newblock North-Holland, Amsterdam, 1986.

\bibitem{stack:cosets}
{Mathematics StackExchange}.
\newblock Left and right transversals of a group.
\newblock
  \url{https://math.stackexchange.com/questions/134523/left-and-right-transversals-of-groups/},
  2019.
\newblock [Online; accessed 8-July-2019].

\bibitem{MICALI:1980}
Silvio Micali and Vijay~V. Vazirani.
\newblock An {$O(\sqrt{|V|} |E|)$} algorithm for finding maximum matching in
  general graphs.
\newblock {\em Proceedings of the 21st IEEE Symposium on the Foundations of
  Computer Science}, pages 17--27, 1980.

\bibitem{MUNKRES:1957}
James Munkres.
\newblock Algorithms for the assignment and transportation problems.
\newblock {\em Journal of the Society for Industrial and Applied Mathematics},
  5(1):32--38, 1957.

\end{thebibliography}

\end{document}